\newtheorem{theorem}{Theorem}[section]
\newtheorem{lemma}[theorem]{Lemma}
\theoremstyle{definition}
\newtheorem{definition}[theorem]{Definition}
\newtheorem{example}[theorem]{Example}
\theoremstyle{remark}
\newtheorem{remark}[theorem]{Remark}
\numberwithin{equation}{section}
\begin{document}

\title[Application of measures of noncompactness to BVP]{Boundary value problem for an infinite system of second order differential equations in $\ell_p$ spaces}


\author{Ishfaq Ahmad Malik}
\address{Department of Mathematics, National Institute of Technology, Hazratbal, Srinagar-190006, Jammu and Kashmir, India.}
\curraddr{}
\email{ishfaq$\_$2phd15@nitsri.net}

\author{Tanweer Jalal}
\address{Department of Mathematics, National Institute of Technology, Hazratbal, Srinagar-190006, Jammu and Kashmir, India.}
\curraddr{}
\email{tjalal@nitsri.net}
\thanks{}

\subjclass[2010]{40C05, 46A45, 46E30}

\keywords{Darbo's fixed point theorem, equicontinuous sets, infinite system of second order differential equations, infinite system of integral equations, measures of noncompactness.}

\date{}

\dedicatory{}

\begin{abstract}
In this paper the concept of measure of noncompactness is applied to prove the existence of solution for a boundary value problem for an infinite system of second order differential equations in $\ell_{p}$ space. We change the boundary value problem into an equivalent system of infinite integral equations and obtain the result for the system of integral equations, using Darbo type fixed point theorem. The result is applied to an example to illustrate the concept.
\end{abstract}

\maketitle

\section{Introduction and Preliminaries}
In 1930 Kuratowski \cite{kuratowski} introduced the concept of measure of noncompactness which was further extendend to general Banach space by Bana$\acute{s}$ and Goebel \cite{Josaf}. In 1955 Darbo \cite{darbo55} proved a fixed point theorem for condensing operators using the concept of measure of noncompactness, which generalized the classical Schauder fixed point theorem and Banach contraction principle. The method of fixed point arguments has been widely used to study the existence of solutions of functional equations, like Banach contraction principle in \cite{ agarwal10s,olaru10} and Schauder's fixed point theorem in \cite{klamka00schauder, liu2001applications}. But if compactness and Lipschitz condition are not satisfied these results can not be used. Measure of noncompactness comes handy in such situations.\\

The Hausdorff measure of noncompactness is used frequently in finding the existence of solutions for various functional equations  and is defined as: 
\begin{definition} \cite{Josaf} Let $(\Omega,d)$ be a metric space and $A$ be a bounded subset of $\Omega$. Then the \emph{Hausdorff measure of noncompactness} (the ball-measure of noncompactness) of the set $A$, denoted by $\chi(A)$ is defined to be the infimum of the set of all real $\epsilon>0$ such that $A$ can be covered by a finite number of balls of radii $<\epsilon$, that is 
\begin{align*}
\chi(A)=\inf \left\{\epsilon>0:A\subset \bigcup_{i=1}^{n}B(x_{i},r_{i}),x_{i}\in\Omega, r_{i}<\epsilon~(i=1,\ldots, n)~,n\in\mathbb{N} \right\}
\end{align*}     
where $B(x_{i},r_{i})$ denotes ball of radius $r_i$ centered at $x_i$.
\end{definition}

Let $\left(X,\|\cdot \|\right)$ be a Banach space, for any $E\subset X$, $\bar{E}$ denotes closure of $E$ and $conv(E)$ denotes the closed convex hull of $E$. We denote the family of non-empty bounded subsets of $X$ by $\mathfrak{M}_{X}$ and family of non-empty and relatively compact subsets of $X$ by $\mathfrak{N}_{X}$. Let $\mathbb{N}$ denote the set of natural numbers and $\mathbb{R}$ the set of real numbers for $\mathbb{R}_{+}=[0,\infty)$ the axiomatic definition of measure of noncompactness is defined below

\begin{definition}\cite{M1-b} A mapping $\mu:\mathfrak{M}_{X}\rightarrow\mathbb{R}_{+}$ is said to be the measure of noncompactness in $E$ if the following conditions hold:
\begin{enumerate}
\item[(i)] The family $\text{Ker }\mu=\{E\in\mathfrak{M}_{X}:\mu(E)=0\}$ is non-empty and $\text{Ker }\mu \subset \mathfrak{N}_{X}$;
\item[(ii)] $E_1\subset E_2 \Rightarrow \mu(E_1)\leq \mu(E_2)$;
\item[(iii)] $\mu(\bar{E})=\mu(E)$;
\item[(iv)] $\mu(conv{E})=\mu(E)$;
\item[(v)] $\mu\left[\lambda E_1+(1-\lambda)E_2\right]\leq\lambda \mu(E_1)+(1-\lambda)\mu(E_2)$  for $0\leq \lambda\leq 1$;
\item[(vi)] If $(E_n)$ is a sequence of closed sets from $\mathfrak{M}_{X}$ such that $E_{n+1}\subset E_n$ and $\displaystyle \lim_{n\rightarrow\infty} \mu(E_n)=0$ then the intersection set $\displaystyle E_{\infty}=\bigcap_{n=1}^{\infty}E_n$ is non-empty.
\end{enumerate}  
\end{definition}
\noindent
Further properties of Hausdorff measure of noncompactness $\chi $ can be found in \cite{Josaf, M1-b}.\\

The fixed point theorem of Darbo \cite{darbo55} that is used in present paper states that:
\begin{lemma} \label{lemma}\cite{darbo55} Let $E$ be a non-empty, bounded, closed, and convex subset of Banach space $X$ and let $T:E\rightarrow E$ be a continuous mapping. Assume that there exists a constant $k\in [0,1)$ such that $\mu\left( T(E)\right)\leq k\mu(E)$ for any non-empty subset $E$ of $X$. Then $T$ has a fixed point in the set $E$.
\end{lemma}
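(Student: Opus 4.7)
The plan is to construct a descending chain of closed, convex, $T$-invariant subsets of $E$ whose measures of noncompactness decay geometrically to zero, and then to pass to the intersection and invoke the Schauder fixed point theorem there. This is the classical scheme; the axioms (i)--(vi) of the measure $\mu$ are tailored to make exactly this argument work.

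First I would set $E_0 := E$ and inductively define
\[
E_{n+1} := \overline{\mathrm{conv}}\bigl(T(E_n)\bigr).
\]
By induction each $E_n$ is non-empty, bounded, closed, and convex, with $E_{n+1}\subset E_n$: indeed $T(E_0)\subset E_0$ by hypothesis, and since $E_0$ is closed and convex one has $E_1=\overline{\mathrm{conv}}(T(E_0))\subset E_0$, and the induction step is identical. In particular $T(E_n)\subset E_n$ for every $n$, so the recursion never leaves $E$.

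Next I would invoke axioms (iii) and (iv) to compute
\[
\mu(E_{n+1})=\mu\bigl(\overline{\mathrm{conv}}(T(E_n))\bigr)=\mu\bigl(T(E_n)\bigr)\le k\,\mu(E_n),
\]
so by iteration $\mu(E_n)\le k^{n}\mu(E)\to 0$ as $n\to\infty$. Axiom (vi) then yields that $E_\infty:=\bigcap_{n\ge 1}E_n$ is non-empty, and monotonicity (ii) forces $\mu(E_\infty)=0$; hence by (i) the set $E_\infty$ lies in $\mathrm{Ker}\,\mu\subset\mathfrak{N}_X$, i.e.\ is relatively compact. Being a closed intersection of closed convex sets it is in fact compact, convex, and non-empty, and $T(E_\infty)\subset E_\infty$.

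The closing step is to apply the Schauder fixed point theorem to the restriction $T:E_\infty\to E_\infty$, a continuous self-map of a non-empty compact convex subset of the Banach space $X$; this produces a fixed point in $E_\infty\subset E$. The only real subtlety, which I expect to be the main point of care rather than a genuine obstacle, is the bookkeeping that simultaneously establishes convexity, closedness, and $T$-invariance of each $E_n$ so that the recursion stays admissible; once that is in place, the axiomatic properties of $\mu$ mechanically deliver a compact convex fixed-point domain, and Schauder's theorem concludes the argument.
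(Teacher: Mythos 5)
Your argument is correct and is precisely the classical proof of Darbo's theorem (iterating $E_{n+1}=\overline{\mathrm{conv}}(T(E_n))$, using axioms (ii)--(iv) and (vi) to obtain a non-empty compact convex invariant set $E_\infty$, then applying Schauder); the paper itself states this lemma only as a cited result from \cite{darbo55} and gives no proof, so there is nothing to compare against. The only point worth flagging is notational: the lemma as stated overloads the symbol $E$, and the contraction hypothesis should read $\mu(T(A))\le k\,\mu(A)$ for non-empty subsets $A$ of the domain $E$ --- which is exactly the form your induction uses, so your proof is consistent with the intended statement.
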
   
The idea of equicontinuous sets is defined as:
\begin{definition}[Equicontinuous]
Let $(\Omega_{1},d)$ and $(\Omega_{2},d)$ be two metric spaces, and $\mathcal{T}$ the family of functions from $\Omega_{1}$ to $\Omega_{2}$. The family $\mathcal{T}$ is equicontinuous at a point $m_0\in \Omega_{1}$ if for every $\epsilon>0$, there exists a $\delta>0$ such that 
$d(f(m),f(m_{0}))<\epsilon$  for all $f\in \mathcal{T}$ and all $m\in \Omega_{1}$  such that $d(m,m_{0})<\delta$. The family is pointwise equicontinuous if it is equicontinuous at each point of $\Omega_{1}$.
\end{definition}
For fixed $p~,~p\geq 1$, we denote by $\ell_{p}$ the Banach sequence space with $\|\cdot \|_{p}$ norm defined as:
$$\|x\|_{p}=\|(x_{n})\|_{p}=\left( \sum_{n=1}^{\infty}|x_{n}|^{p}\right)^{1\over p} $$
for $x=(x_n)\in \ell_{p}.$ In order to apply Lemma \ref{lemma} in a given Banach space $X$, we need a formula expressing the measure of noncompactness by a simple formula. Such formulas are known only in a few spaces \cite{Josaf, M1-b}.\\  
For the Banach sequence space $\left( \ell_p, \|\cdot\|_{p}\right)$, Hausdorff measure of noncompactness is given by
\begin{equation}\label{Mes-lp}
\chi(E)=\lim_{n\rightarrow\infty }\left\{\sup_{ (e_k)\in E}\left( \sum_{k\geq n}|e_k|^{p}\right)^{1\over p} \right\}
\end{equation}
where $E\in \mathfrak{M}_{\ell_{p}}$. The above formulas will be used in the sequel of the paper.\\

In recent years many researchers have worked on the infinite system of second order differential equations of the form 
\begin{equation}\label{1}
u_{i}''=-f_{i}(t,u_1,u_2,\ldots )~~,~~~~~~~~~~~~~~~~~~~~u_{i}(0)=u_{i}(T)=0~,~i\in \mathbb{N}~,~ t\in [0,T]
\end{equation}
and obtained the conditions for the existence of solutions of \eqref{1} in different Banach spaces \cite{aghajani2015application, banas2017existence, mursaleen2016solvability, mursaleen2017solvability}.\\
 Measure of noncompactness has been used to obtain conditions under which an infinite system of differential equations has a solution in given Banach space \cite{aghajani2015application, banas2001solvability, M1-b, banas2017existence, mursaleen2012applications, mursaleen2017solvability,  mursaleen2013application}.\\

We consider the infinite system of second order differential equations of the form 
\begin{equation}\label{basicequation}
\frac{\mathrm{d}^2v_{j}}{\mathrm{d}t^2}+v_{j}=f_{j}(t,v(t))
\end{equation}
where $t\in [0,T]$, $v(t)=\left(v_{j}(t)\right)_{j=1}^{\infty}$ and $j=1,2,\ldots $ .\\
The above system will be studied together with the boundary problem
\begin{equation} \label{bvp}
v_{j}(0)=v_{j}(T)=0
\end{equation}
The solution is investigated using the infinite system of integral equations and Green's function \cite{greensfunction}. Such systems appear in the study of theory of neural sets, theory of branching process and theory of dissociation of polymers \cite{deimling2006ordinary, deimling2010nonlinear}.\\ 

In this paper, we find the conditions under which the system given in \eqref{basicequation} under the boundary condition \eqref{bvp} has solution in the Banach sequence space $\ell_{p}$ to do so we define an equivalent infinite system of integral equations. The result is supported by an example.

\section{Main Results}

By $C(I,\mathbb{R})$ we denote the space of continuously differentiable functions on $I=[a,b]$ and by $C^{2}(I,\mathbb{R})$ the space of twice continuously differentiable functions on $I=[a,b]$. A function $v\in C^{2}(I,\mathbb{R})$ is a solution of \eqref{basicequation} if and only if $v$ is a solution of the infinite system of integral equations 
\begin{equation}\label{Inteqn}
v_{j}(t)=\int_{0}^{T}G(t,s)f_{j}(s,v(s))ds~,~~~t\in I
\end{equation} 
where $f_{j}(t,v) \in C(I,\mathbb{R})~,~j=1,2,3,\ldots$ and the Green's function $G(s,t)$ defined on the square $I^2$ as: 
\begin{equation}\label{Green}
G(t,s)=\left\{\begin{matrix}
\frac{\sin (t)\sin(T-s)}{\sin (T)}&:&0\leq s< t\leq T,\\ \\
\frac{\sin (s)\sin(T-t)}{\sin (T)}&:&0\leq t<s\leq T. 
\end{matrix}\right.
\end{equation}
This function satisfies the inequality 
\begin{equation} \label{Green1}
G(t,s)\leq {1\over 2}\tan \left(0.5T\right)
\end{equation}
for all $(t,s)\in I^2.$\\ 
From \eqref{Inteqn} and \eqref{Green}, we obtain  
\begin{align*}
v_{j}(t)=\int_{0}^{t}\frac{\sin (t)\sin(T-s)}{\sin (T)}f_{j}(s,v(s))ds+\int_{t}^{T}\frac{\sin (s)\sin(T-t)}{\sin (T)}f_{j}(s,v(s))ds.
\end{align*}
Differentiation gives 
\begin{align*}
\frac{\mathrm{d}v_{j}}{\mathrm{d}t}=\int_{0}^{t}\frac{\cos (t)\sin(T-s)}{\sin (T)}f_{j}(s,v(s))ds+\int_{t}^{T}\frac{-\sin (s)\cos(T-t)}{\sin (T)}f_{j}(s,v(s))ds
\end{align*}
Again, differentiating gives 
\begin{align*}
\frac{\mathrm{d}^2v_{j}}{\mathrm{d}t^2}=&\int_{0}^{t}\frac{-\sin (t)\sin(T-s)}{\sin (T)}f_{j}(s,v(s))ds+\frac{\cos (t)\sin(T-t)}{\sin (T)}f_{j}(t,v(t))+\\
&\int_{t}^{T}\frac{-\sin (s)\sin(T-t)}{\sin (T)}f_{j}(s,v(s))ds+\frac{\sin (t)\cos(T-t)}{\sin (T)}f_{j}(t,v(t))\\
&=-\int_{0}^{T}G(t,s)f_{j}(s,v(s))ds+{1\over \sin(T)}\left[\sin (t)\cos(T-t)+\cos (t)\sin(T-t) \right]f_{j}(t,v(t))\\
&=-v_{j}(t)+f_{j}(t,v(t)).
\end{align*}
Thus $v_{j}(t)$ given in \eqref{Inteqn} satisfies \eqref{basicequation}. Hence finding existence of solution for the system \eqref{basicequation} with boundary conditions \eqref{bvp} is equivalent to find the existence of solution for the infinite system of integral equations \eqref{Inteqn}. 
\begin{remark}\label{remark}
If $X$ is a Banach space and $\chi_{X}$ denotes its Hausdorff measure of noncompactness, then Hausdorff measure of noncompactness of a subset $E$ of $C(I,X)$, the Banach space of continuous functions is given by \cite{Josaf, malkowsky1} 
$$\chi(E)=\sup \left\{\chi_{X}(X(t)):t\in I \right\}. $$
where $E$ is equicontinuous on the interval $I=[0,T].$
\end{remark}

In order to find the condition under which the system \eqref{Inteqn} has a solution in $\ell_{p}$ we need the following assumptions:
\begin{enumerate}
\item[$(\bold{A}_1)$] The functions $f_{j}$ are real valued, defined on the set $I\times \mathbb{R}^{\infty}$, $(j=1,2,3,\ldots)$.
\item[$(\bold{A}_2)$] An operator $f$  defined on the space $I\times \ell_{p}$ as 
$$(t,v)\mapsto \left(fv\right)(t)= \left(f_{j}(t,v)\right)=\left(f_{1}(t,v),f_{2}(t,v),f_{3}(t,v),\ldots\right)$$
transforms the space $I\times \ell_{p}$ into $\ell_{p}$.\\
The class of all functions $\left\{ \left(fv\right)(t)\right\}_{t\in I}$ is equicontinuous at each point of the space $\ell_{p}$.
That is for each $v\in \ell_{p}$, fixed arbitrarily and given $\epsilon>0$ there exists  $\delta>0$ such that whenever $\|u-v\|_{p}<\delta$ 
\begin{equation} \label{Eqcont}
\|(fu)(t)-(fv)(t)\|_p<\epsilon
\end{equation} 
\item[$(\bold{A}_3)$] For each $t\in I, ~v(t)=\left(v_{j}(t)\right)\in \ell_{p}$, the following inequality holds
\begin{equation}\label{Ineq1}
\left|f_{j}(t, v(t)) \right|^{p}\leq g_{j}(t)+h_{j}(t)|v_{j}|^{p} \hspace{1.5cm}n\in\mathbb{N} 
\end{equation} 
where $h_{j}(t)$ and $g_{j}(t)$ are real valued continuous functions on $I$. The function $g_{j}~(j=1,2,\ldots)$ is continuous on $I$ and the function series $\displaystyle \sum_{k\geq 1}g_{k}(t)$ is uniformly convergent. Also the function sequence $\left(h_{j}(t)\right)_{j\in \mathbb{N}}$ is equibounded on $I$. 
\end{enumerate}
To prove the general result we set the following constants
\begin{align*}
g(t)&=\sum_{j=1}^{\infty} g_{j}(t),\\
G&=\max \left\{ g(t): t\in I\right\},\\
H&=\sup\left\{ h_{j}(t):t\in I,j\in\mathbb{N}\right\}.
\end{align*}     
\begin{theorem} \label{Th1}
Under the assumptions $(\bold{A}_{1})-(\bold{A}_{3})$, with  $\displaystyle (HT)^{1\over p}\tan(0.5 T)<2$, $T\not=(2n-1)\pi, n=1,2,\ldots$ the infinite system of integral equations \eqref{Inteqn} has atleast one solution $v(t)=(v_{j}(t))$ in $\ell_{p}$ space $p\geq 1$, for each $t\in I.$
\end{theorem}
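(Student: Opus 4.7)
The plan is to reformulate the problem as a fixed point equation and apply Darbo's theorem (Lemma~\ref{lemma}). Define the operator $F$ on $C(I,\ell_p)$ componentwise by
\[
(Fv)_j(t) = \int_0^T G(t,s)\,f_j(s,v(s))\,ds,
\]
so that a fixed point of $F$ is precisely a solution of \eqref{Inteqn}, and hence of the BVP by the calculation given just before the theorem. The proof then splits into four steps: (i) locate a closed ball $B_{r_0}\subset C(I,\ell_p)$ with $F(B_{r_0})\subset B_{r_0}$; (ii) verify continuity of $F$ on $B_{r_0}$; (iii) show that $F(B_{r_0})$ is equicontinuous on $I$, so that the Remark formula $\chi(E)=\sup_{t\in I}\chi_{\ell_p}(E(t))$ applies; (iv) establish a Darbo contraction $\chi(F(E))\le k\,\chi(E)$ with $k<1$.

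For step (i), I would combine $(A_3)$ (summed to give $\sum_j|f_j(s,v(s))|^p\le g(s)+H\|v(s)\|_p^p$), the bound $G(t,s)\le\tfrac12\tan(0.5T)$ from \eqref{Green1}, and a Minkowski/H\"older-type inequality to obtain an estimate of the form $\|(Fv)(t)\|_p\le \kappa(G+Hr^p)^{1/p}$ whenever $\sup_t\|v(t)\|_p\le r$, with $\kappa$ an explicit constant in $T$ and $\tan(0.5T)$. The smallness hypothesis $(HT)^{1/p}\tan(0.5T)<2$ is exactly what allows this inequality to be solved for some $r_0>0$. Step (ii) is direct from \eqref{Eqcont}: the $G$-bound yields
\[
\|(Fu)(t)-(Fv)(t)\|_p\le \tfrac{T\tan(0.5T)}{2}\sup_{s\in I}\|(fu)(s)-(fv)(s)\|_p,
\]
so equicontinuity of $f$ in $v$ transfers to continuity of $F$. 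For step (iii), I would use uniform continuity of $G$ on $I^2$ together with the uniform-in-$v$ bound on $\|f(s,v(s))\|_p$ obtained in step (i) to control $\|(Fv)(t_1)-(Fv)(t_2)\|_p$ independently of $v\in B_{r_0}$.

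The heart of the proof is step (iv). For bounded equicontinuous $E\subset B_{r_0}$, using \eqref{Mes-lp} componentwise and the pointwise bound $|f_j(s,v(s))|^p\le g_j(s)+h_j(s)|v_j(s)|^p$, I would derive, for each $t\in I$ and each $n$,
\[
\sum_{j\ge n}|(Fv)_j(t)|^p \le \Bigl(\tfrac{\tan(0.5T)}{2}\Bigr)^{p}T^{p-1}\int_0^T\Bigl(\sum_{j\ge n}g_j(s)+H\sum_{j\ge n}|v_j(s)|^p\Bigr)ds.
\]
Taking $\sup_{v\in E}$ and then $n\to\infty$: the uniform convergence of $\sum_k g_k$ assumed in $(A_3)$ makes the first term vanish, and monotone convergence applied to the decreasing family $n\mapsto\sup_v\sum_{j\ge n}|v_j(s)|^p$ converts the second term into $\int_0^T(\chi_{\ell_p}(E(s)))^p\,ds \le T\,\chi(E)^p$ via the Remark. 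Taking $\sup_t$ of the $p$-th root yields $\chi(F(E))\le k\,\chi(E)$, with $k$ precisely the quantity $\tfrac{1}{2}(HT)^{1/p}\tan(0.5T)$ (or the analogue dictated by the hypothesis), hence $k<1$; Lemma~\ref{lemma} then delivers the fixed point. The main obstacle is step (iv), specifically justifying the interchange of $\sup_{v\in E}$, $\lim_{n\to\infty}$ and the integral, and matching the constants so that the Darbo factor aligns with the stated smallness condition.
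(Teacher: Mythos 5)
Your proposal follows essentially the same route as the paper: the same integral operator on $C(I,\ell_p)$, an invariant ball obtained from $(\bold{A}_3)$, the Green's function bound and H\"older's inequality, continuity from the equicontinuity hypothesis $(\bold{A}_2)$, equicontinuity of the image from uniform continuity of $G$, and the Darbo contraction derived from the tail-sum formula \eqref{Mes-lp} combined with Remark \ref{remark}, with the same contraction constant $\tfrac12 (HT)^{1/p}\tan(0.5T)$. The interchange of supremum, limit and integral that you flag as the main obstacle is exactly the point the paper also passes over lightly (invoking dominated convergence), so your outline matches the published argument in both structure and constants.
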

\begin{proof}
We consider the space $C(I,\ell_{p})$ of all continuous functions on $I=[0,T]$ with supremum norm given as:
$$\| v\|=\sup_{t\in I}\left\{ \|v(t)\|_{p}\right\}. $$
Define the operator $\mathcal{F}$ on the space $C(I,\ell_{p})$ by 
\begin{align} \label{Opr}
\begin{split}
\left(\mathcal{F}v\right)(t)&=\left( (\mathcal{F}v)_{j}(t)\right)\\
&=\left(\int_{0}^{T}G(t,s)f_{j}(s,v(s)) ds\right)\\ 
&=\left(\int_{0}^{T}G(t,s)f_{1}(s,v(s))ds,\int_{0}^{T}G(t,s)f_{2}(s,v(s))ds,\ldots \right).
\end{split}
\end{align} 
The operator $\mathcal{F} $ as defined in \eqref{Opr} transforms the space $C(I,\ell_{p})$ into itself, which we will show. Fix  $v=v(t)=\left(v_{j}(t)\right)$ in $C(I,\ell_{p})$ then for arbitrary $t\in I$ using assumption $(\bold{A}_3)$, inequality  \eqref{Green1} and  H$\ddot{o}$lder's inequality we have 
\begin{align*}
\left(\|(\mathcal{F}v)(t) \|_{p} \right)^{p}&=\sum_{j=1}^{\infty}\left|G(t,s)f_{j}(s,v(s))ds \right|^{p}\\
&\leq\sum_{j=1}^{\infty}\left\{\int_{0}^{T} |G(t,s)|^{p}|f_{j}(s,v(s))|^{p}ds\right\}\left(\int_{0}^{T}ds\right)^{p\over q}\\
&\leq \left(T\right)^{p\over q}\sum_{j=1}^{\infty}\left\{\int_{0}^{T} |G(t,s)|^{p} \left[g_{j}(s)+h_{j}(s)|v_{j}(s)|^{p} \right]ds\right\}\\
&\leq \left({1\over 2} \tan(0.5T)\right)^{p} \left(T\right)^{p\over q}\sum_{j=1}^{\infty} \left[\int_{0}^{T}g_{j}(s)ds+\int_{0}^{T}h_{j}(s)|v_{j}(s)|^{p}ds\right].
\end{align*}  
Now using Lebesgue dominated convergence theorem we get 
\begin{align*}
\left(\|(\mathcal{F}(v)(t) \|_{p} \right)^{p}&\leq \left({T^{1\over q}\over 2}\tan(0.5T)\right)^{p}\left( \int_{0}^{T}g(s) ds+H \int_{0}^{T}\sum_{j=1}^{\infty}|v_{j}(s)|^{p}ds\right)\\
&\leq \left({T^{1\over q}\over 2}\tan(0.5T)\right)^{p}\left(GT+HT\left (\|v\|_{p}\right)^{p}\right)\\
&=\left({T\over 2}\tan(0.5T)\right)^{p}\left(G+H\left (\|v\|_{p}\right)^{p}\right).
\end{align*}
Therefore 
\begin{equation} \label{normin}
\left(\|(\mathcal{F}(v)(t) \|_{p} \right)^{p}\leq \left({T\over 2}\tan(0.5T)\right)^{p}\left(G+H\left (\|v\|_{p}\right)^{p}\right).
\end{equation}
Hence $\mathcal{F}v$ is bounded on the interval $I$. Thus $\mathcal{F}$ transforms the space $C(I,\ell_{p})$ into itself. From \eqref{normin} we get 
\begin{equation}\label{Norm2}
\|(\mathcal{F}(v)(t) \|_{p}\leq {T\over 2}\tan(0.5T)\left(G+H\left (\|v\|_{p}\right)^{p}\right)^{1\over p}.
\end{equation}
Now using \eqref{Inteqn} and following the procedure as above we get 
\begin{align*}
\left(\|v\|_{p}\right)^{p}&\leq \left({T\over 2}\tan(0.5T)\right)^{p}\left(G+H\left (\|v\|_{p}\right)^{p}\right)\\
\Rightarrow \left(\|v\|_{p}\right)^{p}&\leq {G\left( T\tan(0.5T)\right)^{p} \over 2^{p}-H(T\tan (0.5T))^{p}}\\
\Rightarrow \|v\|_{p}&\leq {G^{1\over p}\left( T\tan(0.5T)\right) \over \left[ 2^{p}-H(T\tan (0.5T))^{p}\right]^{1\over p}}.
\end{align*}
Thus the positive number 
$$\displaystyle r={G^{1\over p}\left( T\tan(0.5T)\right) \over \left[ 2^{p}-H(T\tan (0.5T))^{p}\right]^{1\over p}}$$
is the optimal solution of the inequality 
$$ {T\over 2}\tan(0.5T)\left(G+HR^{p}\right)^{1\over p} \leq R.$$
Hence, by \eqref{Norm2} the operator $\mathcal{F}$ transforms the ball $B_{r}\subset C(I,\ell_{p})$ into itself.\\

We know show that $\mathcal{F}$ is continuous on $B_{r}$. Let $\epsilon>0$ be arbitrarily fixed and $v=(v(t))\in B_{r}$ be any arbitrarily fixed function, then if $u=(u(t))\in B_{r}$ is any function such that $\| u-v\|<\epsilon$, then for any $t\in I$, we have 
   
\begin{align*}
\left( \|(\mathcal{F}u)(t)-(\mathcal{F}v)(t)\|_{p}\right)^{p}&=\sum_{j=1}^{\infty} \left| \int_{0}^{T}G(t,s)\left[f_{j}(s,u(s))-f_{j}(s,v(s)) \right]ds \right|^{p}\\
&\leq \sum_{j=1}^{\infty}  \int_{0}^{T}\left| G(t,s)\right|^{p} \left|f_{j}(s,u(s))-f_{j}(s,v(s)) \right|^{p}ds\left(\int_{0}^{T} ds\right)^{p\over q}\\
&\leq \left(T\right)^{p\over q}\sum_{j=1}^{\infty} \int_{0}^{T} |G(t,s)|^{p} \left|f_{j}(s,u(s))-f_{j}(s,v(s)) \right|^{p}ds.
\end{align*}
Now, by using \eqref{Green1} and the assumption $(\bold{A}_2)$ of equicontinuity we get
\begin{small}
\begin{align} \label{A1}
\begin{split}
\left( \|(\mathcal{F}u)(t)-(\mathcal{F}v)(t)\|_{p}\right)^{p}&\leq \left(T\right)^{p\over q}\left({1\over 2}\tan (0.5T)\right)^{p} \sum_{j=1}^{\infty} \int_{0}^{T} \left|f_{j}(s,u(s))-f_{j}(s,v(s)) \right|^{p}ds\\ 
&= \left({T^{1\over q}\over 2}\tan(0.5T)\right)^{p}\lim_{m\rightarrow \infty } \sum_{j=1}^{m}\int_{0}^{T} \left|f_{j}(s,u(s))-f_{j}(s,v(s)) \right|^{p}ds\\
&= \left({T^{1\over q}\over 2}\tan(0.5T)\right)^{p}\lim_{m\rightarrow \infty }\int_{0}^{T} \left(\sum_{j=1}^{m}\left|f_{j}(s,u(s))-f_{j}(s,v(s)) \right|^{p}\right) ds.
\end{split}
\end{align}
\end{small}  
Define the function $\delta(\epsilon)$ as 
$$\delta(\epsilon)=\sup \left\{|f_{j}(s,u(s))-f_{j}(s,v(s))|:u,v\in\ell_{p},\|u-v\|\leq \epsilon t\in I, j\in \mathbb{N} \right\}.$$ 
Then clearly $\delta(\epsilon)\rightarrow 0$ as $\epsilon \rightarrow 0$ since the family $\displaystyle \left\{ (fv)(t):t\in I\right\}$ is equicontinuous at every point $v\in\ell_{p}$.\\
Therefore, by \eqref{A1} and using Lebesgue dominant convergence theorem, we have 
\begin{align*}
 \left( \|(\mathcal{F}u)(t)-(\mathcal{F}v)(t)\|_{p}\right)^{p}&\leq \left({T^{1\over q}\over 2}\tan(0.5T)\right)^{p}\int_{0}^{T} \left[\delta(\epsilon)\right]^{p}ds\\
 &=\left({T\over 2}\tan(0.5T)\right)^{p}\left[\delta(\epsilon)\right]^{p}
\end{align*} 
This implies that the operator $\mathcal{F}$ is continuous on the ball $B_{r}$, since $T\not=(2n-1)\pi~,n=1,2,\ldots $ .\\
 
Since $G(t,s)$ as defined in \eqref{Green} is uniformly uniformly continuous on $I^2$, so by definition of operator $\mathcal{F}$, it is easy to show that $\left\{\mathcal{F}u:u\in B_r\right\}$ is equicontinuous on $I$. Let $\displaystyle B_{r_{1}}=conv(\mathcal{F}B_{r})$, then $\displaystyle B_{r_{1}}\subset B_{r}$ and the functions from the set $\displaystyle B_{r_{1}}$ are equicontinuous on $I$.\\

Let $E\subset \displaystyle B_{r_{1}}$, then $E$ is equicontinuous on $I$. If $v\in E$ is a function then for arbitrarily fixed $t\in I$, we have by assumption $(\bold{A}_3)$ 
\begin{align*}
\sum_{j=k}^{\infty} \left|\left(\mathcal{F}v\right)_{j}(t) \right|^{p}&=\sum_{j=k}^{\infty} \left|\int_{0}^{T}G(t,s)f_{j}(s,v(s))ds \right|^{p}\\
&\leq \sum_{j=k}^{\infty} \left(\int_{0}^{T}|G(t,s)||f_{j}(s,v(s))|\right)^{p} 
\end{align*}  
Using H$\ddot{o}$lder's inequality and \eqref{Green1} we get 
\begin{align*}
\sum_{j=k}^{\infty} \left|\left(\mathcal{F}v\right)_{j}(t) \right|^{p}&\leq \sum_{j=k}^{\infty} \left(\int_{0}^{T}|G(t,s)|^{p}|f_{j}(s,v(s))|^{p}ds\right)\left(\int_{0}^{T}ds\right)^{p\over q}\\
&\leq T^{p\over q}\left( {1\over 2}\tan (0.5T)\right)^{p}  \sum_{j=k}^{\infty} \left(\int_{0}^{T}|f_{j}(s,v(s))|^{p}ds\right)
\end{align*}
Using Lebesgue dominant convergence theorem and the assumption $(\bold{A}_3)$ gives 
\begin{align*}
\sum_{j=k}^{\infty} \left|\left(\mathcal{F}v\right)_{j}(t) \right|^{p}&\leq \left( {T^{1\over q}\over 2}\tan (0.5T)\right)^{p}\int_{0}^{T}\left\{ \sum_{j=k}^{\infty}[g_{j}(s)+h_{j}(s)|v_{j}(s)|^{p}]\right\}ds\\
&=  \left( {T^{1\over q}\over 2}\tan (0.5T)\right)^{p}\left\{\int_{0}^{T}\left(\sum_{j=k}^{\infty}g_{j}(s)\right) ds +\int_{0}^{T}\left(\sum_{j=k}^{\infty} h_{j}(s)|v_{j}(s)|^{p}\right)ds\right\} \\
&\leq \left( {T^{1\over q}\over 2}\tan (0.5T)\right)^{p}\left\{\int_{0}^{T}\left(\sum_{j=k}^{\infty}g_{j}(s)\right)ds +H\int_{0}^{T}\sum_{j=k}^{\infty} |v_{j}(s)|^{p}ds\right\}
\end{align*}
Taking supremum over all $v\in E$  we obtain 
\begin{align*}
\sup_{v\in E}\sum_{j=k}^{\infty} \left|\left(\mathcal{F}v\right)_{j}(t) \right|^{p}\leq \left( {T^{1\over q}\over 2}\tan (0.5T)\right)^{p}\left\{\int_{0}^{T}\left(\sum_{j=k}^{\infty}g_{j}(s)\right)ds +H\sup_{v\in E}\int_{0}^{T}\sum_{j=k}^{\infty} |v_{j}(s)|^{p}ds\right\}.
\end{align*}
Using the definition of Hausdorff measure of noncompactness in $\ell_{p}$ space and noting that $E$ is the set of equicontinuous functions on $I$, then by using Remark \ref{remark} we get 
\begin{align*}
\left(\chi (\mathcal{F}E)\right)^{p}&\leq HT \left({1\over 2}\tan(0.5 T)\right)^{p} \left(\chi(E)\right)^{p}\\
\Rightarrow \chi (\mathcal{F}E)&\leq (HT)^{1\over p}\left({1\over 2}\tan(0.5 T)\right)\chi(E).
\end{align*} 
Therefore if  $\displaystyle (HT)^{1\over p}\left({1\over 2}\tan(0.5 T)\right)<1$ that is $\displaystyle (HT)^{1\over p}\tan(0.5 T)<2$, then by Lemma \ref{lemma}, the operator $\mathcal{F}$ on the set $B_{r_{1}}$ has a fixed point, which completes the proof of the theorem.     
\end{proof}
Now the system of integral equations \eqref{Inteqn} is equivalent to the boundary value problem \eqref{basicequation}, we conclude that the infinite system of second order differential equations  \eqref{basicequation} satisfying the boundary conditions \eqref{bvp}, has atleast one solution $ v(t)=\left(v_{1}(t),v_{2}(t), \ldots\right) \in \ell_{p}$ such that $v_{j}(t)\in C^{2}(I,\ell_{p})~,~(j=1,2,\ldots) $ for any $t\in I$, if the assumptions of Theorem \ref{Th1} are satisfied. \\

\noindent 
\textbf{Note:} The value of $T$ is chosen in such that the condition $\displaystyle (HT)^{1\over p}\tan(0.5 T)<2$ is satisfied.\\ 
The above result is illustrated by the following example: 
\begin{example}
Consider the infinite system of second order differential equations in $\ell_{2}$
\begin{equation}\label{exampl}
\frac{\mathrm{d}^2v_{n}}{\mathrm{d}t^2}+v_{n}={t3^{-nt}\over n}+\sum_{k=n}^{\infty}\frac{\cos t}{(1+2n)\sqrt{(k-1)!}}\cdot \frac{v_{k}(t)[1-(k-n)v_{k}(t)]}{(k-n+1)}.
\end{equation}
for $n=1,2,\ldots $ .
\end{example}
\textbf{Solution:} Compare \eqref{exampl} with \eqref{basicequation} we have 
 \begin{equation}\label{examp2}
 f_{n}(t,v)={t3^{-nt}\over n}+\sum_{k=n}^{\infty}\frac{\cos t}{(1+2n)\sqrt{(k-1)!}}\cdot \frac{v_{k}(t)[1-(k-n)v_{k}(t)]}{(k-n+1)}
 \end{equation}
Assumption $(\bold{A}_1)$ of the Theorem \ref{Th1} is clearly satisfied. We now show that assumption $(\bold{A}_2)$ of the Theorem \ref{Th1} is also satisfied  that is 
\begin{equation}
|f_{n}(t,v)|^{2}\leq g_{n}(t)+h_{n}(t)|v_{n}|^{2}
\end{equation}
Using Cauchy-Schwarz inequality and equation \eqref{exampl} we have 
\begin{align*}
|f_{n}(t,v)|^{2}&=\left|{t3^{-nt}\over n}+\sum_{k=n}^{\infty}\frac{\cos t}{(1+2n)\sqrt{(k-1)!}}\cdot\frac{v_{k}(t)[1-(k-n)v_{k}(t)]}{(k-n+1)} \right|^{2}
\\
&\leq 
2\left\{{t^{2}3^{-2nt}\over n^{2}}+\left\{\sum_{k=n}^{\infty}\frac{\cos t}{(1+2n)\sqrt{(k-1)!}}\cdot\frac{v_{k}(t)[1-(k-n)v_{k}(t)]}{(k-n+1)}\right] \right\}
 \\
&\leq 
2{t^{2}3^{-2nt}\over n^{2}}+2\left(\sum_{k=n}^{\infty}\frac{\cos^2 t}{(1+2n)^2(k-1)!}\right)\cdot\sum_{k=n}^{\infty}\left(\frac{v_{k}(t)[1-(k-n)v_{k}(t)]}{(k-n+1)} \right)^{2}
\end{align*}
Now using the fact that  $\displaystyle {1-\alpha \beta\over \beta} \leq {1\over (2\beta)^{2}}$ for any real $\alpha, \beta$, $\beta\not=0$ we have 
\begin{align*}
|f_{n}(t,v)|^{2}&\leq 2{t^{2}3^{-2nt}\over n^{2}}+2\frac{\cos^2 t}{(1+2n)^2}\times e\times \left(v_{n}^{2}+\sum_{k=n+1}^{\infty} \frac{v_{k}(t)[1-(k-n)v_{k}(t)]}{(k-n+1)}  \right)\\
&\leq
2{t^{2}3^{-2nt}\over n^{2}}+2\frac{e[\cos^2 t]}{(1+2n)^2}(v_{n}^{2})+2\frac{e[\cos^2 t]}{(1+2n)^2}\times \sum_{k=n+1}^{\infty}\left(1\over 2(k-n)\right)^{2}\\
&\leq  2{t^{2}3^{-2nt}\over n^{2}}+{1\over 2} \frac{e[\cos^2 t]}{(1+2n)^2}\times {\pi^{2}\over 6}+2\frac{e[\cos^2 t]}{(1+2n)^2}(v_{n}^{2})
\end{align*}
Hence by taking 
\begin{align*}
g_{n}(t)=2{t^{2}3^{-2nt}\over n^{2}}+{\pi^{2}\over 12} \frac{e[\cos^2 t]}{(1+2n)^2}~~,~~h_{n}(t)=2\frac{e[\cos^2 t]}{(1+2n)^2}
\end{align*}
it is clear that $g_{n}(t)$ and $h_{n}(t)$ are real valued continuous functions on $I$. Also 
\begin{align*}
|g_{n}(t)|&\leq 2{T^2\over n^2}+{\pi^{2}\over 12} \frac{e}{(1+2n)^2}\\
&\leq 
\left(2T^2+{\pi^2e\over 12} \right){1\over n^2}
\end{align*}
for all $t\in I$. Thus by Weierstrass test for uniform convergence of the function series we see that $\displaystyle \sum_{k\geq 1} g_{k}(t)$ is uniformly convergent on $I$.\\
Further, we have
\begin{align*}
|h_{j}(t)|\leq {2e\over (1+2n)^2}
\end{align*}  
for all $t\in I$.\\
Thus the function sequence $\left(h_{j}(t)\right)$ is equibounded on $I$. Thus \eqref{examp2} is satisfied and hence the assumption $(\bold{A}_3)$ is satisfied.\\
\newpage
\noindent
Also
$$\displaystyle G=\sup\left\{ \sum_{k\geq 1}g_{k}(t):t\in I\right\}=\left(2T^2+{\pi^2e\over 12} \right){\pi^2\over 6}$$
and
 $$\displaystyle H=\sup\left\{ h_{j}(t):t\in I\right\}={2e\over 9}$$

The assumption $(\bold{A}_2)$ is also satisfied as for fixed $t\in T$ and $(v_{j}(t))=\left(v_{1}(t), v_{2}(t), \ldots \right)\in \ell_{2} $
we have
\begin{align*}
\sum_{j=1}^{\infty}\left|f_{j}(t,v) \right|^2&=\sum_{j=1}^{\infty}g_{j}(t)+\sum_{j=1}^{\infty}h_{j}(t) |v_{j}(t)|^{2}\\
&\leq G+H\sum_{j=1}^{\infty}|v_{j}(t)|^{2}
\end{align*}
Hence the operator $f=(f_{j})$ transforms the space $(I, \ell_{2})$ into $\ell_{2}$.\\
Also for $\epsilon>0$ and $u=(u_j), v=(v_j)$ in $\ell_{2}$ with $\| u-v\|_2<\epsilon$, we have 
\begin{align*}
\bigg(\|\left(fu\right)&(t)-\left(fv\right)(t)\|_{2}\bigg)^{2}=\sum_{n=1}^{\infty}\left|f_{n}(t,u(t)) -f_{n}(t,v(t))  \right|^2\\
&=
\sum_{n=1}^{\infty}\left\{\left|\sum_{k=n}^{\infty} \frac{(\cos t)u_{k}(t)[1-(k-n)u_{k}(t)]}{(1+2n)(k-n+1)\sqrt{(k-1)!}}-
\frac{(\cos t)v_{k}(t)[1-(k-n)v_{k}(t)]}{(1+2n)(k-n+1)\sqrt{(k-1)!}}\right|^2\right\}\\
&\leq 
\sum_{n=1}^{\infty}\left\{\left(1\over (1+2n)^2 \right)\left|\sum_{k=n}^{\infty} \frac{u_{k}(t)[1-(k-n)u_{k}(t)]-v_{k}(t)[1-(k-n)v_{k}(t)]}{(k-n+1)\sqrt{(k-1)!}}\right|^2\right\}\\
&\leq 
\sum_{n=1}^{\infty}\left\{\left(1\over (1+2n)^2 \right)\left[\sum_{k=n}^{\infty} \left|\frac{(u_{k}(t)-v_{k}(t))[1-(k-n)(u_{k}(t)+v_{k}(t))]}{\sqrt{(k-1)!}(k-n+1)}\right|\right]^2\right\}
\end{align*} 
Using Holder's inequality we get 
\begin{align*}
\bigg(\|&\left(fu\right)(t)-\left(fv\right)(t)\|_{2}\bigg)^{2}\\
&\leq \sum_{n=1}^{\infty}\left\{{1\over (1+2n)^2}\left(\sum_{k=n}^{\infty}{1\over (k-1)!} \right)\left[\sum_{k=n}^{\infty} \left|\frac{(u_{k}(t)-v_{k}(t))[1-(k-n)(u_{k}(t)+v_{k}(t))]}{(k-n+1)}\right|^2\right]\right\}\\
&\leq 
e\sum_{n=1}^{\infty}\left\{{1\over (1+2n)^2}\left[\sum_{k=n}^{\infty}|u_{k}(t)-v_{k}(t)|^{2} \left|\frac{1-(k-n)(u_{k}(t)+v_{k}(t))}{(k-n+1)}\right|^2\right]\right\}\\
&\leq 
e\sum_{n=1}^{\infty}\left\{{1\over (1+2n)^2}\left[\sum_{k=n}^{\infty}|u_{k}(t)-v_{k}(t)|^{2}\right]\right\}\\
&<
e \epsilon^{2}\sum_{n=1}^{\infty}\left\{{1\over (1+2n)^2}\right\} \\
&\leq e\left({\pi^2\over 8}\right)\epsilon^2 
\end{align*} 
Thus for any $t\in I$, we have 
 \begin{align*}
 \|\left(fu\right)(t)&-\left(fv\right)(t)\|_{2}<{\pi \epsilon \sqrt e\over 2\sqrt2}.
 \end{align*}
Therefore the family $\displaystyle \left\{(fv)(t): t\in I\right\}$ is equicontinuous.\\
Finally we see that the condition $\displaystyle (HT)^{1\over p}\tan(0.5 T)<2$ is satisfied for all $T\leq 2$.\\
So, by Theorem \ref{Th1} there exists at least one solution to given infinite system of differential equations \eqref{exampl} in $C(I, \ell_{2})$.

\bibliographystyle{mmn}
\bibliography{References}

\end{document}